\newcommand{\R}{\mathbb{R}}
\DeclareMathOperator{\E}{\mathbb{E}}
\DeclareMathOperator*{\argmin}{arg\,min}
\DeclarePairedDelimiterX{\norm}[1]{\lVert}{\rVert}{#1}
\DeclarePairedDelimiterX{\inp}[2]{\langle}{\rangle}{#1, #2}
\newtheorem{theorem}{Theorem}
\newtheorem{lemma}[theorem]{Lemma}
\newdefinition{remark}{Remark}
\newproof{pf}{Proof}
\newproof{pot}{Proof of Theorem \ref{thm2}}
\theoremstyle{plain}
\def\ps@pprintTitle{
 \let\@oddhead\@empty
 \let\@evenhead\@empty
 \def\@oddfoot{}
 \let\@evenfoot\@oddfoot}
\begin{document}

\begin{frontmatter}

\title{Penalty \& Augmented Kaczmarz Methods For Linear Systems \& Linear Feasibility Problems}

\author[label1]{Md Sarowar Morshed}
\address[label1]{Independent Researcher}
\ead{riponsarowar@outlook.com}

\begin{abstract}
In this work, we shed light on the so-called Kaczmarz method for solving \textit{Linear System} (LS) and \textit{Linear Feasibility} (LF) problems from a optimization point of view. We introduce well-known optimization approaches such as
Lagrangian penalty and Augmented Lagrangian in the \textit{Randomized Kaczmarz} (RK) method. In doing so, we propose two variants of the RK method namely the \textit{Randomized Penalty Kacmarz} (RPK) method and \textit{Randomized Augmented Kacmarz} (RAK) method. We carry out convergence analysis of the proposed methods and obtain linear convergence results. 
\end{abstract}

\begin{keyword}
Randomized Kaczmarz \sep Linear System \sep Linear Feasibility \sep Augmented Lagrangian \sep Method of Multipliers \sep Penalty Method
\MSC[2010]
90C05\sep 90C51 \sep 65K05 \sep 65B05 \sep 34A25

\end{keyword}

\end{frontmatter}



\section{Introduction} \label{sec:intro}
In this work, we concentrated on solving the following fundamental problems:
\begin{align}
    & \textbf{LS:} \quad Ax = b, \ \ b \in \R^m, \ A \in \R^{m\times n} \label{ls} \\
    & \textbf{LF:} \quad Ax \leq b, \ \ b \in \R^m, \ A \in \R^{m\times n} \label{lf}
\end{align}
Problems \eqref{ls} and \eqref{lf} are significant and central to a wide variety of computational fields such as \textit{Numerical Linear Algebra},  \textit{Scientific Computing}, \textit{Convex Optimization}, \textit{Signal Processing}, etc. For instance solving LS problems approximately is of practical benefit in the inexact Newton schemes for solving large-scale optimization problems. With the advent of big data, iterative methods such as Kaczmarz-type methods are gaining considerable attraction among researchers for their simplicity and efficiency. Kaczmarz method was discovered by Kaczmarz \cite{kaczmarz:1937} in 1937. Variant of Kaczmarz methods gained traction after the seminal works of Strohmer \textit{et. al} \cite{strohmer:2008} and Leventhal \textit{et. al} \cite{lewis:2010}. Recently, Kaczmarz-type methods have been explored to solve a wide variety of problems such as LS, LF, least square, low-rank matrix recovery, etc \citep{needell:2010,eldar:2011,zouzias:2013,NEEDELL:2014,ma:2015,NEEDELL:2015, blockneddel:2015,needell:2016, quadratic:2016,needell:2016,haddock:2017,haddock:2019, razaviyayn:2019, haddock:2019,needell2019block, Morshed2019, morshed2020generalization, morshed:momentum, morshed:sketching, morshed2020stochastic}.

\paragraph{Kaczmarz Method for LS $\& $ LF} Starting from any random point $x_k$, Kaczmarz method updates the next iterate $x_{k+1}$ by solving the following optimization problems:
\begin{align}
& \textbf{LS:} \quad x_{k+1} =     \argmin_x \frac{1}{2}\|x-x_k\|^2 \quad \textbf{s.t} \quad  a_i^Tx = b_i, \label{mp1} \\
& \textbf{LF:} \quad x_{k+1} =     \argmin_x \frac{1}{2}\|x-x_k\|^2 \quad \textbf{s.t} \quad  a_i^Tx \leq b_i, \label{mp1000}
\end{align}
We can derive the closed form solutions of the above problems as follows:
\begin{align}
\label{mp2}
& \textbf{LS:} \quad x_{k+1} = x_k - \frac{a_i^Tx_k-b_i}{\|a_i\|^2} \ a_i, \quad \textbf{LF:} \quad x_{k+1} = x_k - \frac{\left(a_i^Tx_k-b_i\right)^+}{\|a_i\|^2} \ a_i
\end{align}

\section{Lagrangian Penalty Approaches}
\label{sec:lp}
In this section, we first discuss the Kaczmarz method for solving both LS and LF problems. Then, we introduce the proposed approaches based on Lagrangian multiplier.

\paragraph{Penalty Kaczmarz Method} Let's write the following penalty function formulation of the above optimization problems. Introducing the penalty parameter $\rho > 0$ in the formulations \eqref{mp1} and \eqref{mp1000}, we get the following:
\begin{align}
& \textbf{LS:} \quad  x_{k+1} =     \argmin_x \mathcal{L}_1(x, \rho) =  \frac{1}{2} \|x-x_k\|^2 + \frac{\rho}{2}  \ | a_i^Tx-b_i|^2. \label{mp3} \\
& \textbf{LF:} \quad  x_{k+1} =     \argmin_x \mathcal{L}_2(x, \rho) =  \frac{1}{2} \|x-x_k\|^2 + \frac{\rho}{2}  \ | \left(a_i^Tx-b_i\right)^+|^2. \label{mp4}
\end{align}
Setting $\frac{\partial \mathcal{L}_1(x, \rho)}{\partial x} = 0$, we get
\begin{align}
\label{mp5}
    0 = x-x_k +  \rho  \  a_i (a_i^Tx-b_i) \Rightarrow  x = \left(I + \rho a_i a_i^T \right)^{-1} \left[x_k + \rho \  b_i a_i\right] = x_k - \frac{ a_i^T x_k -b_i  }{\frac{1}{\rho}+ \|a_i\|^2} \ a_i
\end{align}
Here, we used the Sherman–Morrison matrix identity, $\left(I + \rho a_i a_i^T\right)^{-1} = I  - \frac{a_i a_i^T}{\frac{1}{\rho}+ \|a_i\|^2}$. The solution is unique as the matrix $I + \rho a_i a_i^T$ is invertible for any $\rho > 0$. Now, for the problem \eqref{mp4}, we only need to consider the case $a_i^Tx - b_i \geq 0$. This is because whenever $ a_i^Tx - b_i  < 0$, we have $x_{k+1} = x_k$. Setting $\frac{\partial \mathcal{L}_2(x, \rho)}{\partial x} = 0$, we get
\begin{align*}
    0 = x-x_k +  \rho  \  a_i (a_i^Tx-b_i) \Rightarrow  x = \left(I + \rho a_i a_i^T \right)^{-1} \left[x_k + \rho \  b_i a_i\right]
\end{align*}
Simplifying like before, we get the following relations:
\begin{align}
\label{mp6}
a_i^Tx_{k+1} - b_i \geq 0: \quad x_{k+1} =  x_k - \frac{ a_i^T x_k -b_i  }{\frac{1}{\rho}+ \|a_i\|^2} \ a_i, \quad \quad a_i^Tx_{k+1} - b_i < 0: \quad x_{k+1} =  x_k 
\end{align}
Now note that the condition $a_i^Tx_{k+1} - b_i \geq 0$ implies $\frac{(a_i^Tx_{k} - b_i) \|a_i\|^2}{\frac{1}{\rho}+ \|a_i\|^2} \geq 0$. This is possible whenever we have $a_i^Tx_{k} - b_i \geq 0$. Therefore, we can write the above relations as follows:
\begin{align}
\label{mp7}
a_i^Tx_{k} - b_i \geq 0: \quad x_{k+1} =  x_k - \frac{ a_i^T x_k -b_i  }{\frac{1}{\rho}+ \|a_i\|^2} \ a_i, \quad \quad a_i^Tx_{k} - b_i < 0: \quad x_{k+1} =  x_k 
\end{align}
Combining the above together, we get the following relation:
\begin{align}
\label{mp8}
x_{k+1} =  x_k - \frac{ \left(a_i^T x_k -b_i\right)^+  }{\frac{1}{\rho}+ \|a_i\|^2} \ a_i
\end{align}

 \begin{algorithm}
\caption{$x_{K+1} = \textbf{RPK}(A,b, c, K)$}
\label{alg:spl}
\begin{algorithmic}
\STATE{Choose initial point $x_0 \in \R^n,  \ \rho_0 \in \R$}
\WHILE{$k \leq K$}
\STATE{Select index $i$ with probability $p_i = \|a_i\|^2/\|A\|^2_F$ and update
\begin{equation*}
 r_k = \begin{cases}
a_i^T x_k -b_i , \ \ \textbf{LS} \\
\left(a_i^T x_k -b_i\right)^+ , \ \ \textbf{LF}
\end{cases} , \quad  x_{k+1} =  x_k - \frac{ r_k \ a_i }{\frac{1}{\rho_k}+ \|a_i\|^2} , \quad \rho_{k+1} = c \rho_k.
\end{equation*}
$k \leftarrow k+1$;}
\ENDWHILE
\end{algorithmic}
\end{algorithm}

\subsection{Randomized Augmented Kaczmarz (RAK)}
\label{sec:als}
Now, we discuss the proposed RAK method for solving both the LS and LF problems. Using augmented Lagrangian penalty function, we car rewrite problems \eqref{mp1} and \eqref{mp1000} as follows:
\begin{align}
\label{mp9}
& \textbf{LS:} \quad x_{k+1} =   \argmin_{x} \frac{1}{2} \|x-x_k\|^2 + \frac{\rho}{2} |a_i^Tx-b_i|^2 \ \ \ \textbf{s.t}. \ \ \ a_i^Tx = b_i \\
& \textbf{LF:} \quad x_{k+1} =   \argmin_{x} \frac{1}{2} \|x-x_k\|^2 + \frac{\rho}{2} |(a_i^Tx-b_i)^+|^2 \ \ \ \textbf{s.t}. \ \ \ a_i^Tx \leq b_i
\end{align}
The Lagrangians of the above problems can be simplified as follows:
\begin{align}
\label{mp10}
 & \textbf{LS:} \quad   \mathcal{L}_1 (x,z, \rho) = \frac{1}{2} \|x-x_k\|^2  +  \frac{\rho}{2} |a_i^Tx-b_i|^2  + z  (a_i^Tx-b_i) \\
 & \textbf{LF:} \quad  \mathcal{L}_2 (x,z, \rho) = \frac{1}{2} \|x-x_k\|^2 + \frac{1}{2 \rho} \big | \left\{z+ \rho (a_i^Tx-b_i) \right \}^+ \big |^2
\end{align}
Then, we formulated the Augmented Kaczmarz scheme by setting up the following update formulas:
\begin{align}
   & \textbf{LS:} \quad  x_{k+1} = \argmin_{x} \mathcal{L}_1 (x,z_k, \rho), \quad  z_{k+1} = z_k + \rho (a_i^Tx_{k+1}-b_i) \label{mp11} \\
    & \textbf{LF:} \quad x_{k+1} = \argmin_{x} \mathcal{L}_2 (x,z_k, \rho), \quad z_{k+1} = \left\{z_k+ \rho (a_i^Tx_{k+1}-b_i) \right \}^+ \label{mp12}
\end{align}
We can explicitly derive the update formulas in closed form. Setting $\frac{\partial \mathcal{L} (x,z_k, \rho)}{\partial x} = 0$, we get
\begin{align*}
    0 = x-x_k+  z_k  a_i + \rho  a_i (a_i^T x - b_i) \Rightarrow  x = \left(I + \rho a_i a_i^T \right)^{-1} \left[x_k + \rho \  b_i a_i - z_k  a_i \right]
\end{align*}
The solution is unique as the matrix $I + \rho a_i a_i^T$ is invertible for any $\rho > 0$. Using the Sherman–Morrison matrix identity we get the following simplified identity:
\begin{align}
\label{mp5}
x_{k+1} =  x_k - \frac{ a_i^T x_k -b_i + \frac{1}{\rho} z_k }{\frac{1}{\rho}+ \|a_i\|^2} \ a_i, \quad \Rightarrow z_{k+1} = z_k + \rho (a_i^Tx_{k+1}-b_i) = \frac{ a_i^T x_k -b_i + \frac{1}{\rho} z_k }{\frac{1}{\rho}+ \|a_i\|^2} 
\end{align}
Now, for the problem \eqref{mp12}, we only need to consider the case $ z_k+ \rho (a_i^Tx - b_i) \geq 0$. This is because whenever $ z_k+ \rho (a_i^Tx - b_i) < 0$, we have $x_{k+1} = x_k, \ z_{k+1} = 0$. Setting $\frac{\partial \mathcal{L}_2 (x,z_k, \rho)}{\partial x} = 0$, we get
\begin{align*}
    0 = x-x_k+  z_k  a_i + \rho  a_i (a_i^T x - b_i) \Rightarrow  x = \left(I + \rho a_i a_i^T \right)^{-1} \left[x_k + \rho \  b_i a_i - z_k  a_i \right]
\end{align*}
Simplifying like before, we get the following simplified relations:
\begin{align*}
& z_k+ \rho (a_i^Tx_{k+1} - b_i) \geq 0: \quad x_{k+1} = x_k - \frac{ a_i^T x_k -b_i + \frac{1}{\rho} z_k }{\frac{1}{\rho}+ \|a_i\|^2} \ a_i , \quad   z_{k+1} = \frac{ a_i^T x_k -b_i + \frac{1}{\rho} z_k }{\frac{1}{\rho}+ \|a_i\|^2}  \\
& z_k+ \rho (a_i^Tx_{k+1} - b_i) < 0: \quad x_{k+1} =  x_k, \quad z_{k+1} = 0
\end{align*}
Now, note that whenever $z_k+ \rho (a_i^Tx_{k+1} - b_i) < 0$ holds we have $x_{k+1} =  x_k, \quad z_{k+1} = 0$. This implies that $z_k+ \rho (a_i^Tx_{k+1} - b_i) = z_k+ \rho (a_i^Tx_{k} - b_i)$. Similarly, we will show that the condition $z_k+ \rho (a_i^Tx_{k+1} - b_i) \geq 0$ is equivalent to the condition $z_k+ \rho (a_i^Tx_{k} - b_i) \geq 0$ for the first case. To prove that we use the expression of $x_{k+1}$ and get the following:
\begin{align*}
z_k+  \rho (a_i^Tx_{k+1} - b_i)  & = z_k+ \rho (a_i^Tx_{k} - b_i) -   \frac{ \rho \left[a_i^Tx_{k} - b_i + \frac{1}{\rho} z_k\right]}{\frac{1}{\rho}+ \|a_i\|^2}  \|a_i\|^2 = \frac{ \frac{1}{\rho} z_k+ a_i^Tx_{k} - b_i}{\frac{1}{\rho}+ \|a_i\|^2}
\end{align*}
as $\rho > 0$, the term $\frac{1}{\rho}+ \|a_i\|^2$ is always greater than zero. This proves the equivalency. Using this condition, we have the following: 
\begin{align*}
& z_k+ \rho (a_i^Tx_{k} - b_i) \geq 0: \quad x_{k+1} = x_k - \frac{ a_i^T x_k -b_i + \frac{1}{\rho} z_k }{\frac{1}{\rho}+ \|a_i\|^2} \ a_i , \quad   z_{k+1} = \frac{ a_i^T x_k -b_i + \frac{1}{\rho} z_k }{\frac{1}{\rho}+ \|a_i\|^2}  \\
& z_k+ \rho (a_i^Tx_{k} - b_i) < 0: \quad x_{k+1} =  x_k, \quad z_{k+1} = 0
\end{align*}
Combining the above together, we get the following relation:
\begin{align}
x_{k+1} =  x_k - \frac{ \left(a_i^T x_k -b_i + \frac{1}{\rho} z_k \right)^+  }{\frac{1}{\rho}+ \|a_i\|^2} \ a_i, \quad z_{k+1} =  \frac{ \left(a_i^T x_k -b_i + \frac{1}{\rho} z_k \right)^+  }{\frac{1}{\rho}+ \|a_i\|^2}
\end{align}

 \begin{algorithm}
\caption{$x_{K+1} = \textbf{RAK}(A,b, c, K)$}
\label{alg:ssd}
\begin{algorithmic}
\STATE{Choose initial points $x_0 \in \R^n, \ z_0 \in \R^p, \ \rho_0 \in \R$}
\WHILE{$k \leq K$}
\STATE{Select index $i$ with probability $p_i = \|a_i\|^2/\|A\|^2_F$ and update
\begin{equation*}
 z_{k+1} = \begin{cases}
\frac{  (a_i^T x_k -b_i) + \frac{1}{\rho_k} z_k }{\frac{1}{\rho_k} +\|a_i\|^2} , \ \ \textbf{LS} \\
\frac{ \left[  (a_i^T x_k -b_i) + \frac{1}{\rho_k} z_k \right]^+ }{\frac{1}{\rho_k} +\|a_i\|^2} , \ \ \textbf{LF}
\end{cases} , \quad  x_{k+1} =  x_k - z_{k+1} a_i , \quad \rho_{k+1} = c \rho_k.
\end{equation*}
$k \leftarrow k+1$;}
\ENDWHILE
\end{algorithmic}
\end{algorithm}

\begin{remark}
\label{rem:1}
Note that, in Algorithm \ref{alg:ssd} we provide an adaptive version of the RAK method. In the adaptive version, the penalty parameter $\rho$ is updated gradually in a way such that $\rho_{k}$ become very large after finitely many iterations. To achieve this, we set a step size $c>1$ and update the penalty parameter as follows: 
\begin{align*}
   & \textbf{LS:} \quad  x_{k+1} = \argmin_{x} \mathcal{L}_1 (x,z_k, \rho_k), \quad z_{k+1} = z_k + \rho_k  (a_i^Tx_{k+1}-b_i ),  \quad \rho_{k+1} = c \rho_k \\
   & \textbf{LF:} \quad  x_{k+1} = \argmin_{x} \mathcal{L}_2 (x,z_k, \rho_k), \quad z_{k+1} = \left[z_k + \rho_k  (a_i^Tx_{k+1}-b_i )\right]^+,  \quad \rho_{k+1} = c \rho_k
\end{align*}
This scheme improves the efficiency of the proposed RAK method. This type of iterative approach is frequently used in the classical augmented Lagrangian framework \cite{Hestenes1969,BERTSEKAS}. Furthermore, if we take $\rho \rightarrow \infty$ in the above relations, we get the update formulas of the randomized Kaczmarz method. To see this, first note that, for the LS and LF problems taking $\rho \rightarrow \infty$, we get
\begin{align}
  & \textbf{LS:} \quad  x_{k+1} =  x_k - \frac{ a_i^T x_k -b_i  }{ \|a_i\|^2} \ a_i, \quad \textbf{LF:} \quad x_{k+1} =  x_k - \frac{ \left(a_i^T x_k -b_i \right)^+  }{ \|a_i\|^2} \ a_i
\end{align}
This is precisely the RK update formulas for the LS and LF problems. For the dual sequences in the limit the relations hold trivially. To show this, we note that the following holds:
\begin{align*}
& \textbf{LS:} \quad   0 = \lim_{\rho \rightarrow \infty} \frac{1}{\rho} \left(z_{k+1}- z_{k} \right) = a_i^Tx_{k+1}-b_i  \\
& \textbf{LF:} \quad  0 = \lim_{\rho \rightarrow \infty} \frac{1}{\rho} z_{k+1} = \lim_{\rho \rightarrow \infty} \left(a_i^T x_{k+1} -b_i + \frac{1}{\rho} z_k \right)^+ = \left(a_i^T x_{k+1} -b_i\right)^+
\end{align*}
The above relations mean we have $a_i^Tx_{k+1} = b_i$ and $a_i^Tx_{k+1} \leq b_i$, respectively for the LS and LF problems.
\end{remark}

\section{Convergence Theory}  We now present the convergence results for the proposed RPK and RAK methods. We will show that for the LS problem $\E[\|x_{k}-x^*\|^2] \rightarrow 0$ where, $ x^*  = \argmin_{Ax = b} \|x_0-x\|^2  = x_0 - A^{\dagger} (Ax_0-b) $. And for the LF problem, $\E[d(x_k, \mathcal{X})^2] \rightarrow 0$, where, $d(x_k, \mathcal{X})^2 =  \argmin_{x \in \mathcal{X}} \|x_k-x\|^2 = \|x_k- \mathcal{P} (x_{k})\|^2$, $\mathcal{P}(x_{k})$ denotes the projection of $x_k$ onto $\mathcal{X}$.

\begin{lemma}
\label{lem0}
(Hoffman \cite{hoffman}, Theorem 4.4 in \cite{lewis:2010}) Let $x \in \R^n$ and $\mathcal{X}$ be the feasible region, then there exists a constant $L > 0$ such that the following identity holds:
\begin{align*}
  d(x,\mathcal{X})^2 \leq L^2 \ \|(Ax-b)^+\|^2.
\end{align*}
\end{lemma}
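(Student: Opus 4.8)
\noindent
The plan is to prove the stronger, un-squared bound $d(x,\mathcal{X}) \le L\,\|(Ax-b)^+\|$ and then square both sides. I would begin by disposing of the trivial case: if $x \in \mathcal{X}$ then $d(x,\mathcal{X}) = 0$ and $(Ax-b)^+ = 0$, so the inequality holds with equality; hence assume $x \notin \mathcal{X}$. Let $y^\ast = \mathcal{P}(x)$ denote the unique projection of $x$ onto the nonempty polyhedron $\mathcal{X} = \{y : Ay \le b\}$, i.e. the minimizer of the convex quadratic program $\min_{Ay \le b} \tfrac12\|y-x\|^2$. The first step is to record the KKT conditions for this program: there exist multipliers $\lambda \in \R^m$ with $\lambda \ge 0$ such that $y^\ast - x + A^\top\lambda = 0$ (stationarity), $a_i^\top y^\ast \le b_i$ for all $i$ (feasibility), and $\lambda_i\,(a_i^\top y^\ast - b_i) = 0$ for all $i$ (complementary slackness). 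In particular $x - y^\ast = A^\top\lambda$, and every index $i$ with $\lambda_i > 0$ satisfies $a_i^\top y^\ast = b_i$.

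\noindent
Next I would exploit these relations to turn the squared distance into an inner product against the residual. Using stationarity,
\begin{align*}
\|x-y^\ast\|^2 = \langle x-y^\ast,\, A^\top\lambda\rangle = \sum_{i} \lambda_i\,(a_i^\top x - a_i^\top y^\ast) = \sum_{i:\lambda_i>0} \lambda_i\,(a_i^\top x - b_i),
\end{align*}
where the last equality uses complementary slackness to replace $a_i^\top y^\ast$ by $b_i$ on the support of $\lambda$ and to drop the terms with $\lambda_i = 0$. Since $\lambda_i \ge 0$ and $t \le t^+$ for every real $t$, each summand is bounded by $\lambda_i\,(a_i^\top x - b_i)^+$, so by Cauchy--Schwarz $\|x-y^\ast\|^2 \le \langle \lambda,\,(Ax-b)^+\rangle \le \|\lambda\|\,\|(Ax-b)^+\|$. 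At this point the whole difficulty is reduced to bounding the multiplier norm $\|\lambda\|$ by a constant multiple of $\|x-y^\ast\|$, and the constant that emerges will be precisely the Hoffman constant $L$.

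\noindent
The hard part is this uniform control of the multipliers, and it is where the finiteness intrinsic to a polyhedron is used. Starting from $x - y^\ast = A^\top\lambda$ with $\lambda \ge 0$, I would invoke a Carath\'eodory-type argument for the cone generated by the active rows $\{a_i : \lambda_i > 0\}$ to replace $\lambda$ by a nonnegative multiplier supported on a linearly independent subset $B$ of those rows, still satisfying $x - y^\ast = A_B^\top \lambda_B$. Because $A_B$ has full row rank, $\lambda_B = (A_B A_B^\top)^{-1} A_B (x - y^\ast)$, whence $\|\lambda_B\| \le \|(A_B A_B^\top)^{-1} A_B\|\,\|x-y^\ast\| \le M\,\|x-y^\ast\|$, where $M := \max_B \|(A_B A_B^\top)^{-1} A_B\|$ ranges over the finitely many linearly independent row-subsets $B$ of $A$ and is therefore a finite constant depending only on $A$. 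Substituting into the bound of the previous paragraph gives $\|x-y^\ast\|^2 \le M\,\|x-y^\ast\|\,\|(Ax-b)^+\|$; dividing by $\|x-y^\ast\| > 0$ yields $d(x,\mathcal{X}) = \|x-y^\ast\| \le M\,\|(Ax-b)^+\|$, and squaring with $L = M$ completes the argument. I expect the Carath\'eodory reduction and the verification that the finite maximum $M$ is well defined and independent of $x$ to be the delicate points; alternatively one may simply invoke Hoffman's original theorem, as the statement records.
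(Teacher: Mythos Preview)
The paper does not prove this lemma at all: it is stated as a quotation of Hoffman's classical error bound (and Theorem~4.4 of Leventhal--Lewis), and is then used as a black box in the proofs of Theorems~\ref{th:0} and~\ref{th:1}. So there is no ``paper's own proof'' to compare against.

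Your proposal is a correct, self-contained derivation of the Hoffman bound for the pure inequality system $Ay\le b$, and it follows the standard route: KKT for the projection, the identity $\|x-y^\ast\|^2=\langle\lambda,Ax-Ay^\ast\rangle$, complementary slackness to replace $a_i^\top y^\ast$ by $b_i$ on the support of $\lambda$, and then a Carath\'eodory reduction on the conic representation $x-y^\ast=A^\top\lambda$ to bound $\|\lambda\|$ uniformly in $x$. One point worth making explicit when you write it up: the Carath\'eodory step produces a new multiplier $\lambda'$ whose support is contained in that of the original $\lambda$, so complementary slackness is inherited and the entire chain $\|x-y^\ast\|^2\le\|\lambda'\|\,\|(Ax-b)^+\|$ may be rerun with $\lambda'$; this is what licenses ``substituting into the bound of the previous paragraph.'' With that clarified, the argument is complete and more than the paper itself provides.
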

The constant $L$ is the so-called Hoffman constant. Throughout the convergence analysis, we assume $\|a_i\|^2 = 1$ for all $i$.

\begin{theorem}
\label{th:0}
Consider the RPK algorithm applied to the LS and LF problems. Then, the sequences generated by the RPK algorithm converge and the following hold:
\begin{align*}
   & \textbf{LS:} \quad  \E[\|x_{k+1}-x^*\|^2]  \leq \left(1-\frac{\rho(\rho+2)}{(1+\rho)^2} \frac{\lambda_{\min}(A^TA)}{m}\right)^{k+1} \ \E[\|x_{0}-x^*\|^2]  \\
   & \textbf{LF:} \quad  \E[d(x_{k+1}, \mathcal{X})^2]  \leq \left(1-\frac{\rho(\rho+2)}{(1+\rho)^2} \frac{1}{mL^2}\right)^{k+1} \ \E[d(x_{0}, \mathcal{X})^2]   
\end{align*}
\end{theorem}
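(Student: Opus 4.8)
The plan is to establish a one-step contraction for each case and then unroll the resulting recursion; since the stated bound involves a fixed $\rho$, I work with the constant-penalty setting $\rho_k \equiv \rho$ (i.e.\ $c=1$). Throughout I use the normalization $\|a_i\|^2 = 1$, which collapses the denominator $\tfrac{1}{\rho} + \|a_i\|^2$ to $\tfrac{1+\rho}{\rho}$, so the RPK step reads $x_{k+1} = x_k - \tfrac{\rho}{1+\rho}\, r_k\, a_i$ with $r_k$ the (possibly truncated) residual. The sampling probability is $p_i = \|a_i\|^2/\|A\|_F^2 = 1/m$, so conditional expectations over the index $i$ reduce to uniform averages.

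For the LS case, set $e_k = x_k - x^*$. Since $Ax^* = b$ gives $a_i^T x^* = b_i$, the residual is $r_k = a_i^T e_k$ and the update becomes $e_{k+1} = e_k - \tfrac{\rho}{1+\rho}(a_i^T e_k)\, a_i$. Expanding $\|e_{k+1}\|^2$ and using $\|a_i\|^2 = 1$, the linear and quadratic contributions in $(a_i^T e_k)^2$ combine into the coefficient $2\tfrac{\rho}{1+\rho} - \tfrac{\rho^2}{(1+\rho)^2} = \tfrac{\rho(\rho+2)}{(1+\rho)^2}$, so $\|e_{k+1}\|^2 = \|e_k\|^2 - \tfrac{\rho(\rho+2)}{(1+\rho)^2}(a_i^T e_k)^2$. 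Taking the conditional expectation over $i$ yields $\E_i[(a_i^T e_k)^2] = \tfrac{1}{m}\|A e_k\|^2 \geq \tfrac{1}{m}\lambda_{\min}(A^T A)\|e_k\|^2$, where the last bound holds because each iterate keeps $e_k$ in $\mathrm{range}(A^T)$ (the initial error $x_0 - x^* = A^\dagger(Ax_0 - b)$ lies there and every update adds a multiple of $a_i$). This gives the per-step factor $1 - \tfrac{\rho(\rho+2)}{(1+\rho)^2}\tfrac{\lambda_{\min}(A^T A)}{m}$; taking total expectation and iterating delivers the claim.

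For the LF case, I would first invoke $d(x_{k+1},\mathcal{X})^2 \leq \|x_{k+1} - \mathcal{P}(x_k)\|^2$, valid because $\mathcal{P}(x_k)\in\mathcal{X}$ is a feasible competitor for the projection of $x_{k+1}$. Writing $y_k = x_k - \mathcal{P}(x_k)$ and expanding $\|x_{k+1} - \mathcal{P}(x_k)\|^2$ produces the cross term $-2\tfrac{\rho}{1+\rho}(a_i^T x_k - b_i)^+ (a_i^T y_k)$. The crucial estimate is $(a_i^T x_k - b_i)^+ (a_i^T y_k) \geq \big[(a_i^T x_k - b_i)^+\big]^2$: it is trivial when the residual is nonpositive, and when it is positive it follows from $a_i^T y_k = (a_i^T x_k - b_i) + (b_i - a_i^T \mathcal{P}(x_k)) \geq a_i^T x_k - b_i$ via feasibility $a_i^T \mathcal{P}(x_k) \leq b_i$. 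Substituting, the same coefficient $\tfrac{\rho(\rho+2)}{(1+\rho)^2}$ emerges, giving $\|x_{k+1} - \mathcal{P}(x_k)\|^2 \leq d(x_k,\mathcal{X})^2 - \tfrac{\rho(\rho+2)}{(1+\rho)^2}\big[(a_i^T x_k - b_i)^+\big]^2$. Taking the conditional expectation replaces the last term by $\tfrac{1}{m}\|(Ax_k - b)^+\|^2$, and Hoffman's bound (Lemma \ref{lem0}) gives $\|(Ax_k - b)^+\|^2 \geq \tfrac{1}{L^2} d(x_k,\mathcal{X})^2$, yielding the per-step factor $1 - \tfrac{\rho(\rho+2)}{(1+\rho)^2}\tfrac{1}{mL^2}$. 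Unrolling finishes the proof.

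I expect the main obstacle to be the LF cross-term inequality together with the correct handling of the projection: one must measure contraction against the \emph{fixed} feasible point $\mathcal{P}(x_k)$ (not $\mathcal{P}(x_{k+1})$) and then exploit its feasibility so that the cross term dominates the quadratic term. A secondary subtlety in the LS case is justifying the $\lambda_{\min}(A^T A)$ bound, which requires confirming that the error remains in $\mathrm{range}(A^T)$ throughout (or reading $\lambda_{\min}$ as the smallest nonzero eigenvalue when $A$ is not of full column rank).
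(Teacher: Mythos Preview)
Your proposal is correct and follows essentially the same route as the paper: the same one-step expansion producing the coefficient $\tfrac{\rho(\rho+2)}{(1+\rho)^2}$, the same projection trick $d(x_{k+1},\mathcal{X})^2\le\|x_{k+1}-\mathcal{P}(x_k)\|^2$ together with feasibility of $\mathcal{P}(x_k)$ to control the cross term (the paper phrases this via $x^+x=|x^+|^2$), and the same use of $\E_i[a_ia_i^T]=\tfrac1m A^TA$ and Hoffman's bound before unrolling. Your remark that $e_k\in\mathrm{range}(A^T)$ is needed to invoke $\lambda_{\min}(A^TA)$ is in fact a point the paper glosses over.
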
 

\begin{proof}
Since, $a_i^Tx^* = b_i$ and $\|a_i\|^2 = 1$, using the update formula of the LS problem, we have
\begin{align}
    \|x_{k+1}-x^*\|^2  & = \Big \|x_k-x^* - \frac{\rho(a_i^Tx_k-b_i)}{1+\rho} \ a_i \Big \|^2 = \|x_k-x^*\|^2 - \frac{\rho(\rho+2)}{(1+\rho)^2} \ \left(a_i^Tx_k-b_i\right)^2 \nonumber \\
    & =\|x_k-x^*\|^2 - \frac{\rho(\rho+2)}{(1+\rho)^2} \  (x_k-x^*)^T a_i a_i^T (x_k-x^*)
\end{align}
Taking expectation with respect to index $i$, we get
\begin{align}
   \E_i \left[ \|x_{k+1}-x^*\|^2\right]  & =\|x_k-x^*\|^2 - \frac{\rho(\rho+2)}{(1+\rho)^2} \  (x_k-x^*)^T  \E_i \left[ a_i a_i^T \right] (x_k-x^*) \nonumber \\
   & = \|x_k-x^*\|^2 - \frac{\rho(\rho+2)}{m (1+\rho)^2} \  (x_k-x^*)^T  A^T A (x_k-x^*) \nonumber \\
   & \leq \|x_k-x^*\|^2 - \frac{\rho(\rho+2) \lambda_{\min}(A^TA)}{m (1+\rho)^2} \ \|x_k-x^*\|^2 \nonumber \\
   & =   \left(1-\frac{\rho(\rho+2)}{(1+\rho)^2} \frac{\lambda_{\min}(A^TA)}{m}\right) \ \|x_{k}-x^*\|^2
\end{align}
Here, we used the identity $\E_i \left[ a_i a_i^T \right] = 1/m \sum \nolimits_{i =1}^{m} a_i a_i^T = 1/m \ A^TA$ along with the relation $x^TBx \geq \lambda_{\min}(B) \ \|x\|^2$ for some $B \succeq 0$. Taking expectation again and using the tower property of expectation, we get
\begin{align*}
  \E\left[ \|x_{k+1}-x^*\|^2\right] = \E \left[ \E_i \left[ \|x_{k+1}-x^*\|^2\right] \right] 
   & \leq   \left(1-\frac{\rho(\rho+2)}{(1+\rho)^2} \frac{\lambda_{\min}(A^TA)}{m}\right) \   \E\left[ \|x_{k}-x^*\|^2\right]
\end{align*}
unrolling the recurrence, we get the required result. Similarly for the LF problem, we get
\begin{align}
& d(x_{k+1}, \mathcal{X})^2 = \|x_{k+1}-  \mathcal{P}(x_{k+1})\|^2 \leq \|x_{k+1}-  \mathcal{P}(x_{k})\|^2   = \Big \|x_k-\mathcal{P}(x_{k}) - \frac{\rho(a_i^Tx_k-b_i)^+}{1+\rho} \ a_i \Big \|^2 \nonumber \\
 & = \|x_k-\mathcal{P}(x_{k}) \|^2 + \frac{\rho^2}{(1+\rho)^2} \ |(a_i^Tx_k-b_i)^+|^2 - \frac{2 \rho}{(1+\rho)} (a_i^Tx_k-b_i)^+ [a_i^Tx_k-a_i^T\mathcal{P}(x_{k})] \nonumber \\
 & \leq \|x_k-\mathcal{P}(x_{k}) \|^2 + \frac{\rho^2}{(1+\rho)^2} \ |(a_i^Tx_k-b_i)^+|^2 - \frac{2 \rho}{(1+\rho)} (a_i^Tx_k-b_i)^+ (a_i^Tx_k-b_i) \nonumber \\
 & = d(x_{k}, \mathcal{X})^2 - \frac{\rho(\rho+2)}{(1+\rho)^2}  |(a_i^Tx_k-b_i)^+|^2
\end{align}
here, we used the identity $x^+x = |x^+|^2$. Taking expectation with respect to index $i$, and simplifying we get
\begin{align*}
\E_i \left[d(x_{k+1}, \mathcal{X})^2\right] & \leq d(x_{k}, \mathcal{X})^2 - \frac{\rho(\rho+2)}{(1+\rho)^2}  \E_i \left[ |(a_i^Tx_k-b_i)^+|^2 \right] \nonumber \\
& = d(x_{k}, \mathcal{X})^2 - \frac{\rho(\rho+2)}{m (1+\rho)^2} \sum \limits_{i = 1}^m   |(a_i^Tx_k-b_i)^+|^2 \nonumber \\
& = d(x_{k}, \mathcal{X})^2 - \frac{\rho(\rho+2)}{m (1+\rho)^2}  \| (Ax_k-b)^+ \|^2 \leq \left(1-\frac{\rho(\rho+2)}{(1+\rho)^2} \frac{1}{mL^2}\right) \ d(x_{k}, \mathcal{X})^2  
\end{align*}
Taking expectation again and using the tower property of expectation, we get
\begin{align*}
  \E\left[ d(x_{k+1}, \mathcal{X})^2 \right] = \E \left[ \E_i \left[ d(x_{k+1}, \mathcal{X})^2\right] \right] 
   & \leq   \left(1-\frac{\rho(\rho+2)}{(1+\rho)^2} \frac{1}{mL^2}\right)  \   \E\left[ d(x_{k}, \mathcal{X})^2\right]
\end{align*}
unrolling the recurrence, we get the required result for the LF problem.
\end{proof}

For $k \geq 1$, assume $x_k$ and $z_k$ are random iterates of the RAK algorithm. In the following, we define sequences $\mathcal{V}(x_k,z_k)$ and $\mathcal{U}(x_k,z_k)$.
\begin{align*}
\mathcal{V}(x_k,z_k) = \|x_k-x^*\|^2+ \frac{1}{\rho} \ |z_k |^2, \quad  \mathcal{U}(x_k,z_k) = d(x_{k},\mathcal{X})^2 + \frac{1}{\rho} |z_{k}|^2 
\end{align*}
We will bound the growth of functions $\E[\mathcal{V}(x_k,z_k)]$ and $\E[\mathcal{U}(x_k,z_k)]$. Indeed, we will show that $\E[\mathcal{V}(x_k,z_k)]$ and $\E[\mathcal{U}(x_k,z_k)]$ are Lyapunov functions for LS and LF problems, respectively.

\begin{theorem}
\label{th:1}
Consider the RAK algorithm applied to the LS and LF problems. Then, the sequences generated by the RAK algorithm converge and the following results hold:
\begin{align*}
   & \textbf{LS:} \quad  \E[\mathcal{V}(x_{k+1},z_{k+1})]  \leq \left(1-\frac{\rho}{(1+\rho)} \frac{\lambda_{\min}(A^TA)}{m}\right)^{k+1} \ \E[\mathcal{V}(x_{0},z_{0})]  \\
   & \textbf{LF:} \quad  \E[\mathcal{U}(x_{k+1},z_{k+1})]  \leq  \left(1-\frac{\rho}{(1+\rho)} \frac{1}{mL^2}\right)^{k+1} \ \E[\mathcal{U}(x_{0},z_{0})] 
\end{align*}
\end{theorem}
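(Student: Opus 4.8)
The plan is to treat $\E[\mathcal{V}(x_k,z_k)]$ and $\E[\mathcal{U}(x_k,z_k)]$ as joint primal--dual Lyapunov functions and, for each problem, to derive a one-step contraction $\E_i[\,\cdot\,]\le(1-\gamma)(\,\cdot\,)$ exactly as in the proof of Theorem~\ref{th:0}, then unroll via the tower property. The new ingredient relative to Theorem~\ref{th:0} is that the dual term $\tfrac1\rho|z_k|^2$ must be forced to contract at the same rate as the primal term. For the LS case I would substitute $x_{k+1}=x_k-z_{k+1}a_i$ into $\|x_{k+1}-x^*\|^2$, use $a_i^Tx^*=b_i$ and $\|a_i\|^2=1$ to obtain $\|x_{k+1}-x^*\|^2=\|x_k-x^*\|^2-2z_{k+1}(a_i^Tx_k-b_i)+z_{k+1}^2$, add $\tfrac1\rho|z_{k+1}|^2$, and insert the closed form $z_{k+1}=\frac{(a_i^Tx_k-b_i)+\frac1\rho z_k}{1+\frac1\rho}$. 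Writing $w=(a_i^Tx_k-b_i)+\tfrac1\rho z_k$ so that $(1+\tfrac1\rho)z_{k+1}^2=w^2/(1+\tfrac1\rho)$ and $w^2-2w(a_i^Tx_k-b_i)=\tfrac1{\rho^2}|z_k|^2-(a_i^Tx_k-b_i)^2$, a short simplification yields the \emph{exact} identity
\begin{equation*}
\mathcal{V}(x_{k+1},z_{k+1})=\mathcal{V}(x_k,z_k)-\frac{\rho}{1+\rho}(a_i^Tx_k-b_i)^2-\frac{1}{1+\rho}|z_k|^2.
\end{equation*}
Taking $\E_i$ and using $\E_i[(a_i^Tx_k-b_i)^2]\ge\tfrac{\lambda_{\min}(A^TA)}{m}\|x_k-x^*\|^2$ as in Theorem~\ref{th:0} controls the primal part.

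The decisive step is to absorb the surplus $-\tfrac1{1+\rho}|z_k|^2$ into the rate so as to contract the \emph{whole} function $\mathcal{V}$. With $\gamma=\tfrac{\rho}{1+\rho}\tfrac{\lambda_{\min}(A^TA)}{m}$, the target $\E_i[\mathcal{V}(x_{k+1},z_{k+1})]\le(1-\gamma)\mathcal{V}(x_k,z_k)$ reduces, after matching the $\|x_k-x^*\|^2$ coefficients, to $\tfrac1{1+\rho}|z_k|^2\ge\gamma\tfrac1\rho|z_k|^2$, i.e.\ to $\tfrac{\lambda_{\min}(A^TA)}{m}\le1$. This holds because $\|a_i\|^2=1$ gives $\Tr(A^TA)=\sum_i\|a_i\|^2=m$, whence $\lambda_{\min}(A^TA)\le\tfrac1n\Tr(A^TA)=\tfrac mn\le m$. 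Unrolling the recurrence then delivers the LS estimate.

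For the LF case the argument runs parallel but needs two refinements. Using $d(x_{k+1},\mathcal{X})^2\le\|x_{k+1}-\mathcal{P}(x_k)\|^2$, expanding, and bounding the cross term by $a_i^T(x_k-\mathcal{P}(x_k))\ge a_i^Tx_k-b_i$ (valid since $\mathcal{P}(x_k)\in\mathcal{X}$ forces $a_i^T\mathcal{P}(x_k)\le b_i$) together with $z_{k+1}\ge0$, I would reduce to the same shape as the LS computation but with a positive part. A case split on the sign of $w=(a_i^Tx_k-b_i)+\tfrac1\rho z_k$—the two branches being precisely the two cases used to derive the closed form in Section~\ref{sec:als}, where $w<0$ gives $x_{k+1}=x_k$ and a vanishing residual—then establishes
\begin{equation*}
\mathcal{U}(x_{k+1},z_{k+1})\le\mathcal{U}(x_k,z_k)-\frac{\rho}{1+\rho}|(a_i^Tx_k-b_i)^+|^2-\frac{1}{1+\rho}|z_k|^2.
\end{equation*}
Taking $\E_i$, invoking $\E_i[|(a_i^Tx_k-b_i)^+|^2]=\tfrac1m\|(Ax_k-b)^+\|^2\ge\tfrac1{mL^2}d(x_k,\mathcal{X})^2$ from Lemma~\ref{lem0}, and absorbing the dual term as before finishes the proof, provided $\tfrac1{mL^2}\le1$.

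The main obstacle is not the primal recursion, which is the same algebra as Theorem~\ref{th:0}, but verifying that the leftover dual decrement $-\tfrac1{1+\rho}|z_k|^2$ always dominates what is needed to contract $\tfrac1\rho|z_k|^2$ at rate $\gamma$. For LS this is the clean bound $\lambda_{\min}(A^TA)\le m$; for LF the analogous requirement $mL^2\ge1$ must be justified separately. I would argue it as follows: since $\mathcal{X}$ lies in each half-space $\{a_i^Ty\le b_i\}$ and $\|a_i\|=1$, one has $d(x,\mathcal{X})^2\ge|(a_i^Tx-b_i)^+|^2$ for every $i$, hence $\|(Ax-b)^+\|^2\le m\,d(x,\mathcal{X})^2$; combined with Hoffman's inequality $d(x,\mathcal{X})^2\le L^2\|(Ax-b)^+\|^2$ this forces $mL^2\ge1$ whenever $x$ is infeasible. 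Confirming the displayed one-step bound in the $w<0$ branch (where the positive part truncates both $x_{k+1}$ and $z_{k+1}$) is the remaining point requiring care.
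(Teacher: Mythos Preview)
Your proposal is correct and follows essentially the same route as the paper: derive the exact one-step identity for $\mathcal{V}$ (respectively, the one-step upper bound for $\mathcal{U}$), take $\E_i$, bound the primal decrement via $\lambda_{\min}(A^TA)$ (respectively, Lemma~\ref{lem0}), and absorb the dual decrement $-\tfrac{1}{1+\rho}|z_k|^2$ into the contraction factor. The only stylistic difference is in the LF case: the paper takes expectation first and then manipulates the index sets $\mathcal{I}_k=\{i:\rho(a_i^Tx_k-b_i)+z_k\ge0\}$ and $\mathcal{J}_k=\{i:a_i^Tx_k-b_i\ge0\}$ to arrive at the same bound you obtain more cleanly by a per-$i$ case split on the sign of $w$ before averaging; both routes yield the identical inequality $\E_i[\mathcal{U}(x_{k+1},z_{k+1})]\le\mathcal{U}(x_k,z_k)-\tfrac{1}{1+\rho}|z_k|^2-\tfrac{\rho}{m(1+\rho)}\|(Ax_k-b)^+\|^2$. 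Your explicit verifications that $\lambda_{\min}(A^TA)\le m$ and $mL^2\ge1$ are needed for the absorption step and are worth keeping, as the paper uses these inequalities tacitly without stating them.
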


\begin{proof}
Using the update formula of the LS problem, we have
\begin{align}
 & \mathcal{V}(x_{k+1},z_{k+1}) =   \|x_{k+1}-x^*\|^2 + \frac{1}{\rho} \ |z_{k+1}|^2  =  \|x_k-x^* - z_{k+1} \ a_i \|^2 +  \frac{1}{\rho} \ |z_{k+1}|^2 \nonumber \\
  & = \|x_k-x^*\|^2 -  2 z_{k+1} \ (a_i^Tx_k-b_i) + \frac{1+\rho}{\rho} \ |z_{k+1}|^2  \nonumber \\
    & = \|x_k-x^*\|^2  - \frac{2 (a_i^Tx_k-b_i)}{1+\rho} \left[\rho(a_i^Tx_k-b_i) + z_k \right] + \frac{\rho^2 (a_i^Tx_k-b_i)^2+ |z_k|^2 + 2 \rho z_k  (a_i^Tx_k-b_i) }{\rho(1+\rho)} \nonumber \\
    & = \|x_k-x^*\|^2  - \frac{\rho}{1+\rho} \ (a_i^Tx_k-b_i)^2 +  \frac{1}{\rho}  \  |z_k|^2 - \frac{1}{1+\rho}  \  |z_k|^2
\end{align}
Taking expectation with respect to index $i$, we get
\begin{align}
   \E_i \left[  \mathcal{V}(x_{k+1},z_{k+1}) \right]  & = \mathcal{V}(x_{k},z_{k}) - \frac{\rho}{(1+\rho)} \  (x_k-x^*)^T  \E_i \left[ a_i a_i^T \right] (x_k-x^*) - \frac{1}{1+\rho}  \  |z_k|^2 \nonumber \\
   & \leq \mathcal{V}(x_{k},z_{k}) -  \frac{\rho \ \lambda_{\min}(A^TA)}{m (1+\rho)} \ \|x_k-x^*\|^2 - \frac{1}{1+\rho}  \  |z_k|^2  \nonumber \\
   & \leq   \left(1-\frac{\rho}{(1+\rho)} \frac{\lambda_{\min}(A^TA)}{m}\right) \ \mathcal{V}(x_{k},z_{k}) 
\end{align}
Taking expectation again and using the tower property of expectation, we get
\begin{align*}
  \E\left[  \mathcal{V}(x_{k+1},z_{k+1}) \right] = \E \left[ \E_i \left[  \mathcal{V}(x_{k+1},z_{k+1}) \right] \right] 
   & \leq     \left(1-\frac{\rho}{(1+\rho)} \frac{\lambda_{\min}(A^TA)}{m}\right) \ \E\left[\mathcal{V}(x_{k},z_{k})\right]
\end{align*}
unrolling the recurrence, we get the required result. Using the update formula of the LF problem, we have
\begin{align}
 & \mathcal{U}(x_{k+1},z_{k+1}) =   d(x_{k+1}, \mathcal{X})^2 + \frac{1}{\rho} \ |z_{k+1}|^2  \leq \|x_{k+1} - \mathcal{P}(x_k)\|^2 + \frac{1}{\rho} \ |z_{k+1}|^2 \nonumber \\
 & =  \|x_k-\mathcal{P}(x_k) - z_{k+1} \ a_i \|^2 + \frac{1}{\rho} \ |z_{k+1}|^2 \nonumber \\
  & = \|x_k-\mathcal{P}(x_k)\|^2 -  2 z_{k+1} \ [a_i^Tx_k-a_i^T\mathcal{P}(x_k)] + \frac{1+\rho}{\rho} \ |z_{k+1}|^2  \nonumber \\
    & \leq  d(x_{k}, \mathcal{X})^2   - \frac{2 (a_i^Tx_k-b_i)}{1+\rho} \left[\rho(a_i^Tx_k-b_i) + z_k \right]^+ + \frac{ \big | \left[\rho(a_i^Tx_k-b_i) + z_k \right]^+ \big |^2}{\rho(1+\rho)} \nonumber \\
    & =  d(x_{k}, \mathcal{X})^2  +  \frac{  \left[\rho(a_i^Tx_k-b_i) + z_k \right]^+ }{\rho(1+\rho)} \left[\left[\rho(a_i^Tx_k-b_i) + z_k \right]^+ -2 \rho (a_i^Tx_k-b_i)\right] 
\end{align}
Define, $\mathcal{I}_k = \{i \ | \ \rho(a_i^Tx_k-b_i) + z_k \geq 0 \}$ and $\mathcal{J}_k = \{i \ | \ a_i^Tx_k-b_i \geq 0 \}$ . As $z_k \geq 0$, we have $\mathcal{J}_k  \subseteq \mathcal{I}_k $.  Now, taking expectation with respect to index $i$, we get
\begin{align*}
   \E_i & \left[  \mathcal{U}(x_{k+1},z_{k+1}) \right] \nonumber \\
   & \leq  d(x_{k}, \mathcal{X})^2  +  \sum \limits_{i=1}^m \frac{  \left[\rho(a_i^Tx_k-b_i) + z_k \right]^+ }{m\rho(1+\rho)} \left[\left[\rho(a_i^Tx_k-b_i) + z_k \right]^+ -2 \rho (a_i^Tx_k-b_i)\right]   \nonumber \\
    & =  d(x_{k}, \mathcal{X})^2  +  \sum \limits_{i \in \mathcal{I}_k } \frac{  \left[\rho(a_i^Tx_k-b_i) + z_k \right] }{m\rho(1+\rho)} \left[\left[\rho(a_i^Tx_k-b_i) + z_k \right] -2 \rho (a_i^Tx_k-b_i)\right]   \nonumber \\
    & =  d(x_{k}, \mathcal{X})^2  +  \sum \limits_{i \in \mathcal{I}_k } \frac{ |z_k|^2- \rho^2 (a_i^Tx_k-b_i)^2 }{m\rho(1+\rho)}  \leq  d(x_{k}, \mathcal{X})^2  +\frac{|\mathcal{I}_k| \ |z_k|^2 }{m\rho(1+\rho)} -  \sum \limits_{i \in \mathcal{J}_k }  \frac{\rho (a_i^Tx_k-b_i)^2}{m(1+\rho)}    \nonumber \\
    &  \leq d(x_{k}, \mathcal{X})^2  +\frac{ |z_k|^2 }{\rho(1+\rho)} -  \sum \limits_{i= 1 }^m  \frac{\rho |(a_i^Tx_k-b_i)^+|^2}{m(1+\rho)} =  \mathcal{U}(x_{k},z_{k}) -  \frac{ |z_k|^2 }{1+\rho}-  \frac{\rho \ \|(Ax_k-b)^+\|^2}{m(1+\rho)}    \nonumber \\
   & \leq \mathcal{U}(x_{k},z_{k}) -  \frac{\rho }{m L^2 (1+\rho)} \ d(x_{k}, \mathcal{X})^2 - \frac{1}{1+\rho}  \  |z_k|^2  \leq    \left(1-\frac{\rho}{(1+\rho)} \frac{1}{mL^2}\right) \ \mathcal{U}(x_{k},z_{k}) 
\end{align*}
Taking expectation again and using the tower property of expectation, we get
\begin{align*}
  \E\left[  \mathcal{U}(x_{k+1},z_{k+1}) \right] = \E \left[ \E_i \left[  \mathcal{U}(x_{k+1},z_{k+1}) \right] \right] 
   & \leq     \left(1-\frac{\rho}{(1+\rho)} \frac{1}{mL^2}\right) \ \E\left[\mathcal{U}(x_{k},z_{k})\right]
\end{align*}
unrolling the recurrence, we get the required result.
\end{proof}

\begin{remark}
In the above proof, we provided the convergence analysis considering simplified versions of RAK algorithm. We proved convergence with fixed $\rho$. The proof follows a same argument for the case of adaptive penalty parameter, i.e., $\rho_k$. Indeed, considering () and (), we have the following
\begin{align}
  &  \E_i \left[\mathcal{V}(x_{k+1},z_{k+1})\right]   \leq \mathcal{V}(x_{k},z_{k}) -  \frac{\rho_k \ \lambda_{\min}(A^TA)}{m (1+\rho_k)} \ \|x_k-x^*\|^2 - \frac{|z_k|^2}{1+\rho_k}     -  \frac{c-1}{c \rho_k} |z_{k+1}|^2 \nonumber \\
  &  \E_i \left[  \mathcal{U}(x_{k+1},z_{k+1}) \right]  \leq \mathcal{U}(x_{k},z_{k}) -  \frac{\rho_k  }{m L^2 (1+\rho_k)} \ d(x_{k}, \mathcal{X})^2 - \frac{  |z_k|^2 }{1+\rho_k}    -  \frac{c-1}{c \rho_k} |z_{k+1}|^2 \nonumber
\end{align}
here, we take,  $\mathcal{V}(x_{k},z_{k}) =  \|x_{k}-x^*\|^2 + \frac{1}{\rho_{k}} |z_{k}|^2$ and $\mathcal{U}(x_k,z_k) = d(x_{k},\mathcal{X})^2 + \frac{1}{\rho_k} |z_{k}|^2$. With $c > 1$, we will always obtain a better convergence than the case with $c =1$. In that regard, one needs to obtain a reasonable lower bound of the quantity $\E[\|z_{k+1}\|^2_{G^{-1}}]$ in terms of $\mathcal{V}(x_k,z_k)$. Moreover, the choice $c= 1$ resolves into the case with fixed penalty $\rho$.
\end{remark}



\section{Conclusion}

In this work we proposed two variants of the Kaczmarz method for solving LS and LF problems. We exploited technical tools from continuous optimization to derive the RPK and RAK methods that extends the scope of the Kacmmarz method. We provided linear convergence results under mild conditions for the proposed methods. The proposed algorithms outperform the existing method on a wide variety of test instances. Moreover, the proposed work opens up the possibility of applying the developed techniques to the so-called \textit{Sketch \& Project} method. We rest that topic for future investigations.




\bibliographystyle{unsrt}

\bibliography{reference}

\end{document}